\title[A non-convex asymptotic quantum Horn body]{A non-convex asymptotic quantum Horn body}
\author[Collins]{Beno\^\i{}t Collins$^{\dagger}$}
\address{
Department of Mathematics and Statistics, University of Ottawa,
585 King Edward,
Ottawa, ON
K1N 6N5 Canada, and
CNRS, Department of Mathematics, Lyon 1 Claude Bernard University} 
\email{bcollins@uottawa.ca}
\thanks{\footnotesize $^{\dagger}$Research supported in part by NSERC
Discovery grant RGPIN/341303-2007 and the ANR GranMa and Galoisint grants.}
\author[Dykema]{Kenneth J.\ Dykema$^{*}$}
\address{K.\ Dykema, Department of Mathematics, Texas A\&M University,
College Station, TX 77843-3368, USA}
\email{kdykema@math.tamu.edu}
\thanks{\footnotesize $^{*}$Research supported in part by NSF grant DMS-0600814}
\theoremstyle{plain}
\newtheorem{lemma}{Lemma}[section]
\newtheorem{theorem}[lemma]{Theorem}
\newtheorem{proposition}[lemma]{Proposition}
\theoremstyle{definition}
\theoremstyle{remark}
\newcommand\Mcal{{\mathcal{M}}} 
\begin{document}

\begin{abstract}
We prove by a counterexample that asymptotic quantum Horn bodies are not convex in general.
\end{abstract}

\maketitle

\section{Introduction}

It is  known  that, given $A$ (resp. $B$) selfadjoint matrices 
in $\mathbb{M}_n(\mathbb{C})$ with
eigenvalues
$\lambda_1\geq\ldots \geq \lambda_n$ (resp. $\mu_1\geq\ldots \geq \mu_n$), 
the set of possible eigenvalues of $A+B$, denoted by
$\nu_1\geq \ldots \geq \nu_n$, is a convex polyhedron
of $\{(x_1\geq\ldots \geq x_n)\}\subset \mathbb{R}^n$.
This follows from results by Kirwan, Guillemin and Sternberg (see \cite{kirwan} and references therein).
The actual description of the polyhedron, conjectured by Horn in \cite{H62}
was proved to be true by several authors including Klyachko, Knutson and Tao
(see \cite{F00} and references therein).

The same question can be addressed in the case of a $II_1$ factor, 
namely, given $\lambda,\mu$ (compactly supported) real probability measures, 
what are the  probability measures $\nu$ such that there exists a $II_1$ factor
$M$ with selfadjoint elements $a$ (resp. $b$) in it of distribution
$\lambda$ (resp. $\mu$) such that $a+b$ has
distribution $\nu$.
This situation was studied at length under the additional assumption that
$M$ embeds in $R^{\omega}$ by Bercovici and Li in \cite{BL06}.
Recently it was proved in \cite{BCDLT} that
the assumption $M$ embeds in $R^{\omega}$ is actually not
needed. 
The paper \cite{CD} addressed a similar question where,
instead of considering $A+B$, one considers $a_1\otimes A+a_2\otimes B$ with $a_1$ and $a_2$ 
prescribed selfadjoint matrices.
One observes that this set is not convex in the sense above 
(example 4.3 in \cite{CD}).
This set is called `quantum Horn body' and
it was proved that this set scales asymptotically.
It was also proved in \cite{CD} that
all of these sets being asymptotically 
approximable by their finite dimensional versions
is equivalent to the Connes embedding problem. 
Note that this result, is not only  a reformulation of
the Connes embedding problem: it is rather an embeddability test
for a given $II_1$ factor.

However the geometry of this `quantum Horn body' was quite mysterious and
beyond closedness, nothing was known. 
We asked (Question 4.4, p. 638 of \cite{CD}) whether
the asmyptotic quantum Horn bodies $K^{a_1,a_2}_{\alpha,\beta,\infty}$ are always convex.
The aim of the present paper is to describe in detail one class of examples,
showing that they are not convex in general.

The paper is organized as follows: 
in Section \ref{sec:notations}, we first recall a few notations.
In Section \ref{sec:counterexample},
we exhibit and study our counterexample.
Finally, we end with a few comments and additional remarks.

\section{Notations and known facts}
\label{sec:notations}

Let $\mathbb{R}^N_{\ge}$ denote the set of $N$--tuples
of real numbers listed in nonincreasing order.
The {\em eigenvalue sequence} of an $N\times N$ self--adjoint matrix is its sequence of eigenvalues
repeated according to multiplicity and in nonincreasing order, so as to lie in $\mathbb{R}^N_\ge$.
Consider $\alpha=(\alpha_1,\ldots,\alpha_N)$ and $\beta=(\beta_1,\ldots,\beta_N)$
in $\mathbb{R}^N_{\ge}$.
Let $S_{\alpha,\beta}$ be the set of all possible
eigenvalue sequences $\gamma=(\gamma_1,\ldots,\gamma_N)$ of $A+B$, where $A$ and $B$
are self--adjoint $N\times N$ matrices with eigenvalue sequences $\alpha$ and $\beta$, respectively.
Klyatchko, Totaro, Knutson and Tao described the set $S_{\alpha,\beta}$
in terms first conjectured by Horn.
See Fulton's exposition~\cite{F00}.
We call $S_{\alpha,\beta}$  the {\em Horn body} of $\alpha$ and $\beta$;
It is a closed, convex subset of $\mathbb{R}^N_\ge$.

Let $\mathcal{F}$ be the set of all right--continuous, nonincreasing, bounded functions
$\lambda:[0,1)\to\mathbb{R}$.
Let $\mathcal{M}$ be a von Neumann algebra with normal, faithful, tracial state $\tau$ and let $a=a^*\in\mathcal{M}$.
The {\em distribution} of $a$ is the Borel measure $\mu_a$, supported on the spectrum of $a$,
such that 
\begin{equation}
\tau(a^n)=\int_{\mathbb{R}}t^n\,d\mu_a(t)\qquad(n\ge1).
\end{equation}
The {\em eigenvalue function} of $a$ is 
$\lambda_a\in\mathcal{F}$ defined by
\begin{equation}
\lambda_a(t)=\sup\{x\in\mathbb{R}\mid\mu_a((x,\infty))>t\}.
\end{equation}
Thus, $\mu_a$ is the Lebesgue--Stieltjes measure arising from the unique nondecreasing,
right--continuous function $H:\mathbb{R}\to[0,1]$ such that $H(\lambda(t))=1-t$ at points $t$ where
$\lambda$ is continuous.
Moreover, if $g:\mathbb{R}\to\mathbb{C}$ is continuous, then
\[
\int g\,d\mu_a=\int_0^1g(\lambda_a(t))\,dt.
\]
We call $\mathcal{F}$ the set of all eigenvalue functions.
It is an affine space, where we take scalar multiples and sums of functions in the usual way.

Let
$\Mcal_1^+(\mathbb{R})_c$ denote the set of all compactly supported Borel probability measures on the real line
and let $\mathrm{EV}:\Mcal_1^+(\mathbb{R})_c\to\mathcal{F}$ be the identification given by $\mu_a\mapsto\lambda_a$,
as described above.
Since $\Mcal_1^+(\mathbb{R})_c$ is a subspace of the dual of the algebra $C(\mathbb{R})$ of all continuous functions on $\mathbb{R}$,
we endow $\mathcal{F}$ with the weak$^*$--topology inherited from this pairing.

Let $N\in\mathbb{N}$ and $\alpha,\beta\in\mathbb{R}^N_\ge$.
For $d\in\mathbb{N}$, let
\begin{equation}
K_{\alpha,\beta,d}=\{\lambda_C\mid C=\mathrm{diag}(\alpha)\otimes 1_d+U(\mathrm{diag}(\beta)\otimes1_d)U^*,\,U\in\mathbb{U}_{Nd}\},
\end{equation}
and
\begin{equation}
K_{\alpha,\beta,\infty}=\overline{\bigcup_{d\ge1}K_{\alpha,\beta,d}}\,.
\end{equation}
where the closure is taken according to the weak$^*$--topology on $\mathcal{F}$.
This set was considered by Bercovici and Li~\cite{BL01}, \cite{BL06}
as an infinite dimensional limit of the sets $S_{\alpha,\beta}$.

Let $a_1,a_2\in\mathbb{M}_n(\mathbb{C})_{s.a.}$, and $\alpha,\beta\in\mathbb{R}^N_\ge$.
We consider the set $K^{a_1,a_2}_{\alpha,\beta}$ of the eigenvalue functions of all
matrices of the form
\begin{equation}\label{eq:aUDU}
a_1\otimes\mathrm{diag}(\alpha)+a_2\otimes U\mathrm{diag}(\beta)U^*,\qquad(U\in\mathbb{U}_N).
\end{equation}
We view $K^{a_1,a_2}_{\alpha,\beta}$ as a subset of $\mathcal{F}$
and we may equally well consider the corresponding eigenvalue sequences
and view $K^{a_1,a_2}_{\alpha,\beta}$ as a subset of $\mathbb{R}^{nN}_\ge$.
The set $K^{a_1,a_2}_{\alpha,\beta}$ is seen to be
the analogue of the Horn body $S_{\alpha,\beta}$, but with ``coefficients'' $a_1$ and $a_2$.
We will refer to these sets as {\em quantum Horn bodies}.

Extending the notions introduced above,
for integers $d\ge1$, 
let $K^{a_1,a_2}_{\alpha,\beta,d}$ be the set of
the eigenvalue functions of all matrices of the form
\begin{equation}\label{eq:AUDdU}
a_1\otimes\mathrm{diag}(\alpha)\otimes 1_d+a_2\otimes U(\mathrm{diag}(\beta)\otimes 1_d)U^*,
\qquad(U\in\mathbb{U}_{Nd}).
\end{equation}
If $d'$ divides $d$, then we have
\begin{equation}
K^{a_1,a_2}_{\alpha,\beta,d'}\subseteq K^{a_1,a_2}_{\alpha,\beta,d}\;.
\end{equation}
Let us define
\begin{equation}
K^{a_1,a_2}_{\alpha,\beta,\infty}=\overline{\bigcup_{d\in\mathbb{N}}K^{a_1,a_2}_{\alpha,\beta,d}}\;,
\end{equation}
where the closure is in the weak$^*$--topology for $\mathcal{F}$ described earlier in this section.
Note that the set $K^{a_1,a_2}_{\alpha,\beta,\infty}$ is compact.
We call it \emph{asymptotic quantum Horn body}.

We know from \cite{CD}, Example 4.3,
that $K^{a_1,a_2}_{\alpha,\beta}$ need not be convex,
and we asked whether
it is true that $K^{a_1,a_2}_{\alpha,\beta,\infty}$ must be convex,
or even that $K^{a_1,a_2}_{\alpha,\beta,d}$ must be convex for all $d$ sufficiently large.
(We recall that the convexity we are considering here is with respect to the
affine structure of pointwise addition and scalar mulitplication of real--valued
functions on $[0,1]$.
This is not the same as the affine structure obtained by identifying elements of $\mathcal{F}$
with probability measures on $\mathbb{R}$ and performing vector space operations on measures.)

\section{The counterexample}
\label{sec:counterexample}

We show that $K^{a_1,a_2}_{\alpha,\beta,\infty}$ is not convex when
$\alpha=\beta = (1,0)\in \mathbb{R}^2_{\geq}$ and
the coefficients are 
$$a_1=\begin{pmatrix}1 & 0 \\ 0 & -1\end{pmatrix},
\quad
a_2=\begin{pmatrix}2s-1 & 2\sqrt{s(1-s)}\\ 2\sqrt{s(1-s)} & 1-2s
\end{pmatrix}.$$
Note that both these coefficient matrices are selfadjoint and unitary. 
Their eigenvalues are $\{1,-1\}$ so they are conjugate to 
each other. 
The parameter $s$ takes values in $[0,1]$ and 
these matrices don't commute unless $s\in \{0,1\}$.

If $p$ and $q$ are projections in some $M_{2d}(\mathbb{C})$,
each of normalized trace $1/2$, then $\mathbb{C}^{2d}$
can be written as a direct sum of $d$ subspaces, each of dimension $2$
and each reducing for both $p$ and $q$.
Thus, $p$ and $q$ can be taken to be block diagonal, with $2\times2$ blocks $p_i$ and $q_i$, respectively.
Furthermore, after a change of basis, each of these blocks can be taken of the form
\[
q_i=\begin{pmatrix}1 & 0 \\ 0 & 0\end{pmatrix}
\qquad
p_i=\begin{pmatrix}t_i & \sqrt{t_i(1-t_i)}\\ \sqrt{t_i(1-t_i)} & 1-t_i \end{pmatrix},
\]
for $0\le t_i\le 1$.

Let us consider one such block, and let us write $t$ for $t_i$.
We have
\begin{align*}
a_1\otimes p_i&=\begin{pmatrix}1 & 0 \\ 0 & -1\end{pmatrix}\otimes
\begin{pmatrix}t & \sqrt{t(1-t)}\\ \sqrt{t(1-t)} & 1-t \end{pmatrix} \\[2ex]
a_2\otimes q_i&=
\begin{pmatrix}2s-1 & 2\sqrt{s(1-s)}\\ 2\sqrt{s(1-s)} & 1-2s \end{pmatrix}\otimes
\begin{pmatrix}1 & 0\\ 0 & 0 \end{pmatrix} 
\end{align*}
and
$$
a_1\otimes p_i + a_2\otimes q_i=
\begin{pmatrix}
-1+2s+t & 2\sqrt{(1-s)s} & \sqrt{(1-t)t} & 0\\
 2\sqrt{(1-s)s}  & 1-2s-t & 0 & -\sqrt{(1-t)t}\\
 \sqrt{(1-t)t} & 0 & 1-t & 0 \\
 0 & -\sqrt{(1-t)t} & 0 & -1+t
\end{pmatrix}.
$$

A direct computation shows that
the characteristic polynomial of this matrix is
$$P(\lambda )=(1-t)^2-2(1-t+2st)\lambda^2+\lambda^4.$$
This fourth degree equation has only terms of even degree
and can be solved as a compound second degree equation.
The eigenvalues of $a_1\otimes p_i + a_2\otimes q_i$, in decreasing order, are
as follows:
\begin{align*}
\lambda_1&=\sqrt{1-t+2st+2\sqrt{st-st^2+s^2t^2}} \\
\lambda_2&=\sqrt{1-t+2st-2\sqrt{st-st^2+s^2t^2}} \\
\lambda_3&=-\sqrt{1-t+2st-2\sqrt{st-st^2+s^2t^2}} \\
\lambda_4&=-\sqrt{1-t+2st+2\sqrt{st-st^2+s^2t^2}}
\end{align*}
We regard and $\lambda_1,\ldots,\lambda_4$ as
a functions of $s$ and $t$.
Regarding $s$ as fixed, let
$$\nu_t=\frac{1}{4}\sum_{i=1}^4 \delta_{\lambda_i (s,t)}.$$
Let $\Phi_s:\Mcal_1^+([0,1])\to\Mcal_1^+(\mathbb{R})_c$
be the affine and continuous extension of the map $\delta_t\mapsto\nu_t$.
The above discussion implies:

\begin{proposition}
The asymptotic quantum Horn body $K^{a_1,a_2}_{\alpha,\beta,\infty}$ is
the image of $\mathcal{M}_1^+([0,1])$ under the map
$\mathrm{EV}\circ\Phi_s$.
\end{proposition}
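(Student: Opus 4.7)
The plan is to assemble the ingredients from the preceding calculation into a bijection (up to $\mathrm{EV}$) between $K^{a_1,a_2}_{\alpha,\beta,d}$ and the set of $\Phi_s$-images of discrete empirical measures on $[0,1]$ with $d$ atoms, and then pass to the weak-$*$ closure.

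First I would observe that because $\alpha = \beta = (1,0)$, the matrices $P := \mathrm{diag}(\alpha)\otimes 1_d$ and $Q := U(\mathrm{diag}(\beta)\otimes 1_d)U^*$ are projections in $M_{2d}$, both of normalized trace $1/2$. By the standard two-projections theorem, $\mathbb{C}^{2d}$ decomposes as an orthogonal direct sum of common invariant subspaces of dimension one or two. When $P$ and $Q$ have the same rank, the ``trivial'' $1$-dimensional summands occur in matching complementary types and so can be paired up into $2$-dimensional blocks. Each resulting $2$-dimensional block has the normal form described in the excerpt with some parameter $t_i \in [0,1]$.

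Second I would lift this to the full matrix: the decomposition $\mathbb{C}^{2d} = \bigoplus_{i=1}^d V_i$ induces the decomposition $\mathbb{C}^2 \otimes \mathbb{C}^{2d} = \bigoplus_{i=1}^d (\mathbb{C}^2 \otimes V_i)$ into invariant subspaces for $a_1 \otimes P + a_2 \otimes Q$, and the restriction to each $4$-dimensional block $\mathbb{C}^2 \otimes V_i$ is precisely the $4 \times 4$ matrix $a_1 \otimes p_i + a_2 \otimes q_i$ computed above, with eigenvalues $\lambda_1(s,t_i), \ldots, \lambda_4(s,t_i)$. Hence the eigenvalue empirical measure of $a_1\otimes P+a_2\otimes Q$ is
\[
\frac{1}{4d}\sum_{i=1}^d \sum_{j=1}^4 \delta_{\lambda_j(s,t_i)} \;=\; \frac{1}{d}\sum_{i=1}^d \nu_{t_i} \;=\; \Phi_s\!\left(\frac{1}{d}\sum_{i=1}^d \delta_{t_i}\right),
\]
and conversely any choice of $t_1,\ldots,t_d \in [0,1]$ is realized by a suitable $U \in \mathbb{U}_{2d}$. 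Thus $K^{a_1,a_2}_{\alpha,\beta,d}$ equals the image under $\mathrm{EV}\circ\Phi_s$ of the set of $d$-atomic empirical measures on $[0,1]$.

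Finally I would pass to the limit: the union over $d$ of the $d$-atomic empirical measures is weak-$*$ dense in the compact set $\mathcal{M}_1^+([0,1])$, while $\Phi_s$ is continuous and $\mathrm{EV}$ is a homeomorphism with respect to the relevant topologies. Since $\mathcal{M}_1^+([0,1])$ is compact, the image $(\mathrm{EV}\circ\Phi_s)(\mathcal{M}_1^+([0,1]))$ is compact and hence weak-$*$ closed in $\mathcal{F}$, so taking closures on both sides of the equality for fixed $d$ yields $K^{a_1,a_2}_{\alpha,\beta,\infty} = (\mathrm{EV}\circ\Phi_s)(\mathcal{M}_1^+([0,1]))$. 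The only genuinely delicate step I foresee is verifying that the residual $1$-dimensional summands in the two-projection decomposition can always be paired so as to yield blocks of the stated form; this works precisely because $P$ and $Q$ have equal trace, which forces the complementary trivial pieces to balance.
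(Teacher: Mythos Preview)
Your proposal is correct and follows essentially the same route as the paper. In fact, the paper gives no separate proof for this proposition at all: it simply writes ``The above discussion implies'' and states the result, relying on the preceding block decomposition and eigenvalue computation. Your write-up supplies exactly the details that discussion leaves implicit --- the identification of $K^{a_1,a_2}_{\alpha,\beta,d}$ with the $\Phi_s$-image of $d$-atomic empirical measures, and the passage to the closure via continuity of $\Phi_s$ and compactness of $\mathcal{M}_1^+([0,1])$ --- so there is nothing to compare beyond noting that you have made the argument explicit where the paper does not.
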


Our main result is:

\begin{theorem}
For any choice of $s\in (0,1)$,
the asymptotic quantum Horn body $K^{a_1,a_2}_{\alpha,\beta,\infty}$ is not convex.
\end{theorem}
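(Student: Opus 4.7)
The plan is to exhibit two explicit elements of $K^{a_1,a_2}_{\alpha,\beta,\infty}$ whose pointwise midpoint in $\mathcal{F}$ does not lie in the set. Using the preceding Proposition I take $\mu_1 = \delta_0$ and $\mu_2 = \delta_1$ in $\mathcal{M}_1^+([0,1])$ and set $f_i = \mathrm{EV}(\Phi_s(\mu_i))$. First I would rewrite the roots of the displayed quartic in the cleaner form
\[
\lambda_1(s,t) = \sqrt{1-t(1-s)} + \sqrt{st}, \qquad \lambda_2(s,t) = \sqrt{1-t(1-s)} - \sqrt{st},
\]
which yields the identities $\lambda_1\lambda_2 = 1-t$ and $(\lambda_1-\lambda_2)^2 = 4st$ that will drive the argument. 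A direct calculation then shows that $f_1$ is the two-step function $\pm 1$ with equal weight, $f_2$ is the three-step function with values $2\sqrt{s},0,-2\sqrt{s}$ of weights $1/4,1/2,1/4$, and so their pointwise midpoint corresponds to the four-atom measure
\[
\mu_{\mathrm{mid}} := \tfrac14(\delta_L + \delta_M + \delta_{-M} + \delta_{-L}), \qquad L := \tfrac{1+2\sqrt{s}}{2}, \quad M := \tfrac12,
\]
with $L>M$ for every $s\in(0,1)$.

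The heart of the proof is to assume for contradiction that $\Phi_s(\mu) = \mu_{\mathrm{mid}}$ for some $\mu \in \mathcal{M}_1^+([0,1])$. Since $\Phi_s(\mu) = \int \nu_t\,d\mu(t)$ has support inside $\{\pm\lambda_1(s,t),\pm\lambda_2(s,t) : t \in \mathrm{supp}(\mu)\}$, the measure $\mu$ must be concentrated on
\[
T := \{t \in [0,1] : \{\lambda_1(s,t), \lambda_2(s,t)\} \subseteq \{L, M\}\}.
\]
Each equation $\lambda_j(s,t) = c$ reduces to a quadratic in $t$ after squaring, so $T$ is finite and $\mu$ is a convex combination of point masses in $T$. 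For each $t \in T$ the ordered pair $(\lambda_1(s,t),\lambda_2(s,t))$ must equal one of $(L,L)$, $(L,M)$, $(M,M)$. The identity $(\lambda_1-\lambda_2)^2 = 4st$ shows $\lambda_1 = \lambda_2$ only at $t=0$, where the common value is $1$; hence $(L,L)$ requires $L = 1$ (i.e.\ $s = 1/4$) and $(M,M)$ requires the impossible $M = 1$. The remaining case $(L,M)$ combines $LM = 1-t$ with $(L-M)^2 = 4st$ into the consistency condition $(L-M)^2 = 4s(1-LM)$; substituting the values of $L$ and $M$ reduces this to $s = s(3-2\sqrt{s})$, forcing $\sqrt{s} = 1$, which is outside $(0,1)$.

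For every $s \in (0,1) \setminus \{1/4\}$ this yields $T = \emptyset$, an immediate contradiction. The anticipated obstacle is the borderline case $s = 1/4$, where $L = 1$ admits $t = 0 \in T$. I would dispatch it by a direct check: at $s = 1/4$ the function $\lambda_1(1/4,t) = \sqrt{1-3t/4} + \sqrt{t/4}$ attains its minimum $1$ on $[0,1]$ only at $t \in \{0,1\}$ (with interior maximum $2/\sqrt{3}$ at $t = 1/3$), and $\lambda_2(1/4,1) = 0 \notin \{1,1/2\}$, so $T = \{0\}$. Then $\mu = \delta_0$ is forced, but $\Phi_s(\delta_0) = \tfrac12(\delta_1 + \delta_{-1}) \ne \mu_{\mathrm{mid}}$, closing the contradiction for all $s \in (0,1)$ and thereby proving the non-convexity of the asymptotic quantum Horn body.
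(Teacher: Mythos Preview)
Your proof is correct and takes a genuinely different route from the paper's. Both arguments start from the same two extreme points $\mathrm{EV}\circ\Phi_s(\delta_0)$ and $\mathrm{EV}\circ\Phi_s(\delta_1)$ and reduce the question to showing that a certain symmetric four-atom measure is not in the image of $\Phi_s$ by analysing the support constraint $\{\lambda_1(s,t),\lambda_2(s,t)\}\subseteq\{L,M\}$. The paper keeps the convex parameter $r$ free, squares equations~\eqref{eq:nutsig1}--\eqref{eq:nutsig2} twice to obtain two polynomials in $(r,s,t)$, and then invokes a Gr\"obner basis computation to eliminate $t$ and exhibit, for each $s$, all but finitely many $r\in(0,1)$ as counterexamples. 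You instead fix $r=1/2$ and exploit the factorisation $\lambda_{1,2}=\sqrt{1-t(1-s)}\pm\sqrt{st}$, which yields the clean Vieta-type identities $\lambda_1\lambda_2=1-t$ and $(\lambda_1-\lambda_2)^2=4st$; these collapse the three possible ordered pairs $(L,L),(L,M),(M,M)$ to a one-line consistency check each, forcing $s\in\{1/4,1\}$ or $T=\emptyset$, with the borderline $s=1/4$ handled by the direct observation $\Phi_{1/4}(\delta_0)\ne\mu_{\mathrm{mid}}$. Your argument is more elementary and self-contained (no computer algebra needed) and makes the geometry of the obstruction transparent; the paper's approach, while heavier, yields a one-parameter family of convex combinations outside the body rather than a single one. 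Incidentally, your case analysis already gives $T\subseteq\{0\}$ for $s=1/4$, so the separate minimisation of $\lambda_1(1/4,\cdot)$ is a pleasant confirmation but not strictly required.
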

\begin{proof}
We have
\begin{align*}
\Phi_s(\delta_0)=\nu_0&=\frac12\delta_1+\frac12\delta_{-1} \\
\Phi_s(\delta_1)=\nu_1&=\frac14\delta_{2\sqrt s}+\frac12\delta_0+\frac14\delta_{-2\sqrt s}.
\end{align*}
We will show that some convex combination
\[
r\mathrm{EV}\circ\Phi_s(\delta_0)+(1-r)\mathrm{EV}\circ\Phi_s(\delta_1),
\]
$0<r<1$, does not lie in the image of $\mathrm{EV}\circ\Phi_s$.
The eigenvalue functions in question are constant on the intervals
$[0,\frac14)$, $[\frac14,\frac12)$, $[\frac12,\frac34)$ and $[\frac34,1)$,
and their values there are indicated in Table~\ref{tab:EVs}.
\begin{table}[h]
\caption{Values of the eigenvalue functions on intervals}
\label{tab:EVs}
\begin{tabular}{r||c|c|c|c}
\rule{0ex}{2.5ex}
 &$[0,\frac14)$&$[\frac14,\frac12)$&$[\frac12,\frac34)$&$[\frac34,1)$ \\ \hline
\rule{0ex}{2.5ex}
 $\mathrm{EV}\circ\Phi_s(\delta_0)$
 &$2\sqrt s$&$0$&$0$&$-2\sqrt s$ \\ \hline
\rule{0ex}{2.5ex}
 $\mathrm{EV}\circ\Phi_s(\delta_1)$
 &$1$&$1$&$-1$&$-1$ \\ \hline
\rule{0ex}{2.5ex}
 $\begin{aligned}&r\mathrm{EV}\circ\Phi_s(\delta_0) \\ &+(1-r)\mathrm{EV}\circ\Phi_s(\delta_1)\end{aligned}$
 &$(1-r)+2r\sqrt s$&$1-r$&$r-1$&$r-1-2r\sqrt s$ \\ \hline
\end{tabular}
\end{table}

We have
\[
r\mathrm{EV}\circ\Phi_s(\delta_0)+(1-r)\mathrm{EV}\circ\Phi_s(\delta_1)
=\mathrm{EV}\big(\frac14(\delta_{1-r+2r\sqrt s}+\delta_{1-r}+\delta_{r-1}+\delta_{r-1-2r\sqrt s})\big)
\]
and it will suffice to show that for some $r\in(0,1)$, the measure
\[
\sigma=\frac14(\delta_{1-r+2r\sqrt s}+\delta_{1-r}+\delta_{r-1}+\delta_{r-1-2r\sqrt s})
\]
is not in the image of $\Phi_s$.
For this, it will suffice to show that for some $r\in(0,1)$ and for all $t\in[0,1]$, we have
$\mathrm{supp}(\nu_t)\not\subseteq\mathrm{supp}(\sigma)$.

If
$\mathrm{supp}(\nu_t)\subseteq\mathrm{supp}(\sigma)$,
then we have either (a) $t=0$ and either $r=0$ or $s=1/4$ or
(b) the following equations hold:
\begin{align}
1-r+2r\sqrt s&=\sqrt{1-t+2st+2\sqrt{st-st^2+s^2t^2}} \label{eq:nutsig1} \\
1-r&=\sqrt{1-t+2st-2\sqrt{st-st^2+s^2t^2}}. \label{eq:nutsig2}
\end{align}
Assume for the moment $s\ne1/4$.
Then $\mathrm{supp}(\nu_t)\subseteq\mathrm{supp}(\sigma)$ implies
that equations~\eqref{eq:nutsig1}--\eqref{eq:nutsig2} hold, and this implies that the polynomials
\begin{align*}
p_1&=\begin{aligned}[t]
      &r^4-4 r^3-4 r^2 s t+2 r^2 t+4 r^2+8 r s t-4 r t+4 s^2 t^2 \\
      &-8 s^2 t+4 s^2-4 s t^2+4 s t-4 s+t^2
     \end{aligned}
    \\
p_2&=\begin{aligned}[t]
      &r^4-4 r^3-2 r^2 s t-2 r^2 s+2 r^2 t^2-2 r^2 t+6 r^2+4 r s t \\
      &+4 r s-4 r t^2+4 r t-4 r+s^2 t^2-2 s^2 t+s^2-2 s t^3+4 s t^2 \\
      &-4 s t-2 s+t^4-2 t^3+3 t^2-2 t+1
     \end{aligned}
\end{align*}
both vanish.
However, a Gr\"obner basis for the ideal $I$ generated by $p_1$ and $p_2$ in $\mathbb{C}[r,s,t]$,
computed with respect to an elimination order, reveals that $I\cap\mathbb{C}[r,s]$ is the ideal generated
by the polynomial
\begin{equation}\label{eq:prs}
\begin{aligned}
&\big(r-1\big)^2\,
 \big(r^2-2 r-4 s+1\big)\,
 \big(r^4-4 r^3+4 r^2 s^2-6 r^2 s \\
&\;+6r^2-8 r s^2+12 r s-4 r+4 s^4-4 s^3+5 s^2-6 s+1\big)\, \\
&\; \big(r^6-6r^5+4 r^4 s^2-10 r^4 s +15 r^4-16 r^3 s^2+40 r^3 s \\
&\;-20 r^3+4 r^2 s^4+108r^2 s^3-79 r^2 s^2-28 r^2 s+15 r^2-8 r s^4 \\
&\; -216 r s^3+190 r s^2-24 r s
  -6 r-144 s^5+340 s^4-184 s^3 \\
&\;+13 s^2+6 s+1\big),
\end{aligned}
\end{equation}
where the factors are irreducible.
This implies that,  for every value of $s$, except possibly $s=1/4$,
choosing $r\in(0,1)$ so that the above polynomial~\eqref{eq:prs} does not vanish,
we have $\mathrm{supp}(\nu_t)\not\subseteq\mathrm{supp}(\sigma)$
for every $t\in[0,1]$.

Now supposing $s=1/4$, if $r\in(0,1)$ is such that the polynomial~\eqref{eq:prs} does not vanish,
then there is exactly one value of $t$ such that
$\mathrm{supp}(\nu_t)\subseteq\mathrm{supp}(\sigma)$ holds, namely $t=0$.
However, since $\sigma$ is not itself equal to $\nu_0$, it does not lie in the image of $\Phi_{1/4}$.
\end{proof}

\section{Discussion and concluding remarks}

The result above relies on the fact that the description of the representations 
of the $*$-algebra generated by two representations are particularly easy to 
understand. 
It is easy to generalize the above counterexample by modifying the
values of $a_1,a_2$ although formal computations become
more involved. 
It would be interesting to find a necessary and sufficient
criterion on $a_1,a_2$ in this case for the quantum Horn body
to be convex or not.

However, it is difficult to generalize the above counterexample 
to other sorts of $\lambda$ and $\mu$. Indeed, 
we do not know how to classify the representations of the $*$-algebra
generated by two elements such that at least one of them has a spectrum of
strictly more than two points.

We still wonder whether there exists `purely' asymptotic quantum Horn bodies that
are convex. 

\bibliographystyle{plain}

\end{document}